\date{}
\newtheorem{theorem}{Theorem}[section]
\newtheorem{lemma}[theorem]{Lemma}
\newtheorem{corollary}[theorem]{Corollary}
\newtheorem{definition}[theorem]{Definition}
\theoremstyle{plain}
\newtheorem{remark}[theorem]{Remark}
\numberwithin{equation}{section}
\newcommand{\Om}{\Omega}
\newcommand{\ds}{\displaystyle}
\newcommand{\N}{\mathcal{N}}
\newcommand{\R}{\mathbb{R}}
\begin{document}

\title{Improvement of conditions for finite time blow-up in a fourth-order nonlocal parabolic equation}

\author{Jingbo Meng $^a$,\ Shuyan Qiu $^b$,\ Guangyu Xu $^{a,}$\footnote{Corresponding author: guangyuswu@126.com (G. Xu)}, Hong Yi $^{c,}$\footnote{E-mail addresses: meng4897@zjnu.edu.cn (J. Meng), shuyanqiu0701@126.com (S. Qiu), honghongyi1992@126.com (H. Yi)} \\
{\small a. School of Mathematical Sciences, Zhejiang Normal University, Jinhua 321004, P.R. China} \\ \small b. School of Sciences, Southwest Petroleum University, Chengdu 610500, P.R. China \\ \small c. School of Mathematics and Statistics, Chongqing Technology and Business University, \\ \small Chongqing 400067, P.R. China}

\date{}
\maketitle

\begin{abstract}
  This paper is devoted to the study of blow-up phenomenon for a fouth-order nonlocal parabolic equation with Neumann boundary condition,
  \begin{equation*}
  \left\{\begin{array}{ll}\ds u_{t}+u_{xxxx}=|u|^{p-1}u-\frac{1}{a}\int_{0}^a|u|^{p-1}u\ dx, & \\
  u_x(0)=u_x(a)=u_{xxx}(0)=u_{xxx}(a)=0, & \\
  u(x,0)=u_0(x)\in H^2(0, a),\ \ \int_0^au_0(x)\ dx=0, &\end{array}\right.
  \end{equation*}
  where $a$ is a positive constant and $p>1$. The existing results on the problem suggest that the weak solution will blow up in finite time if $I(u_0)<0$ and the initial energy satisfies some appropriate assumptions, here $I(u_0)$ is the initial Nehari functional. In this paper, we extend the previous blow-up conditions with proving that those assumptions on the energy functional are superfluous and only $I(u_0)<0$ is sufficient to ensure the weak solution blowing up in finite time. Our conclusion depicts the significant influence of mass conservation on the dynamic behavior of solution.
\end{abstract}
\noindent\textbf{Keywords}: Thin-film equation; Blow-up; Energy levels; Nehari functional\\
\textbf{2010 MSC}: 35A15; 35B44; 35K35
\maketitle

\section{Introduction}

In this paper, we consider the following initial boundary value problem:
\begin{equation}\label{fc1}
   \left\{\begin{array}{ll}\ds u_{t}+u_{xxxx}=|u|^{p-1}u-\frac{1}{a}\int_{0}^a|u|^{p-1}udx,&(x,t)\in(0,a)\times(0, T),\\
  u_x(0)=u_x(a)=u_{xxx}(0)=u_{xxx}(a)=0,&t\in(0,T),\\
  u(x,0)=u_0(x),&x\in(0,a),\end{array}\right.
\end{equation}
where $a$ is a positive constant, $p>1, T\in(0, +\infty]$ is the maximal existence time of $u(x, t)$, and the initial data satisfies
\begin{equation}\label{csz}
u_0(x)\in H^2(0,a),\ \ \int_0^au_0(x)dx=0, \ \ u_0(x)\not\equiv0.
\end{equation}

Problem \eqref{fc1} can be taken as a simplified model to describe the evolution of the epitaxial growth of nanoscale thin films. The continuum model for epitaxial thin-film growth was proposed in \cite{ors}, which based upon phenomenological considerations by Zangwill \cite{za}. In this setting, for the spatial and time variable $(x, t)$, the unknown function $u=u(x, t)$ depicts the height of a film in epitaxial growth. The nonlocal source $|u|^{p-1}u-\frac{1}{a}\int_0^a|u|^{p-1}udx$ plays an important role in the whole research process, which together with the homogeneous Neumann boundary condition and the assumption \eqref{csz} on initial data imply that the weak solution $u$ of problem \eqref{fc1} satisfies:
\begin{equation}\label{zlsh}
\int_0^a u_t\ dx=0,
\end{equation}
which further implies that $  \int_0^au(x, t)dx=\int_0^au_0(x)dx=0.$ Namely, the total mass is conserved throughout its evolution. For more derivations and background of the model, one can refer to the references \cite{Qu2016Blow,xzm}. We would like to mention the works \cite{Budd1994Blow,Gao2011Blow,Jazar2008Blow,kbag,Qu2014Blow,Soufi2007A,Souplet1,Souplet2} for more explorations on the related problem with homogeneous Neumann boundary condition and nonlocal source.

We next summarize some existing results for problem \eqref{fc1}-\eqref{csz} and then give the purpose of this paper. To this end, let's firstly introduce some functionals, sets and definitions used in these works, which will also be used throughout this paper. We denote by $\|\cdot\|_q$ the standard $L^q(0, a)$ norm for $1\leq q\leq+\infty$. Let
\begin{equation*}
H=\left\{u\in H^2(0, a)\left|\int_0^a u(x)dx=0\right.\right\},
\end{equation*}
then it is well known that $(H, \|\cdot\|)$ with norm $\|u\|=\|u_{xx}\|_2$ is a Banach space. For any $u\in H$, the energy functional, Nehari functional and Nehari manifold associated with problem \eqref{fc1}-\eqref{csz} can be defined by
\begin{equation*}\label{fcx1}
\begin{split}
&J(u)=\frac{1}{2}\|u_{xx}\|_2^2-\frac{1}{p+1}\|u\|_{p+1}^{p+1},\\
&I(u)=\|u_{xx}\|_2^2-\|u\|_{p+1}^{p+1},\\
&\N=\left\{u\in H\ |\ I(u)=0\right\}\setminus\{0\},
 \end{split}
\end{equation*}
then we can define the depth of potential well by $d=\inf_{u\in \N}J(u)$. Let
\begin{equation*}
\N_-=\left\{u\in H\ |\ I(u)<0\right\}.
\end{equation*}
For $k\in\R$, we let $J^{k}=\{u\in H\left|\right.J(u)\leq k\}$, and it is easy to see that for all $\alpha>d$
\begin{equation*}\label{kf}
\begin{split}
\N_\alpha:=\N\cap J^\alpha=\left\{u\in\N\left|\ \|u_{xx}\|_2\leq\sqrt{\frac{2\alpha(p+1)}{p-1}}\right.\right\}\not=\emptyset.
\end{split}\end{equation*}
We further let $\Lambda_\alpha=\sup\left\{\left.\frac{1}{2}\|u\|_2^2\ \right|\ u\in \mathcal {N}_\alpha\right\}$, the conclusion in \cite[Proposition 1.9]{thin} tells us $\Lambda_\alpha\in(0, +\infty)$ for all $\alpha\in (d, +\infty)$. The weak solution of the problem is defined as follows, which can be found in \cite{Qu2016Blow}.
\begin{definition}\label{dy111}
A function $u=u(x,t)$ is called the weak solution of problem \eqref{fc1}-\eqref{csz} on $\Om\times[0, T)$ if $u\in L^{\infty}(0, T; H), u_t\in L^{2}(0, T; H)$ such that
\begin{equation*}\label{rjdy}
  \int_0^t\int_0^a \left[u_t\varphi+u_{xx}\varphi_{xx}-\left(|u|^{p-1}u-\frac{1}{a}\int_{0}^a|u|^{p-1}udx \right)\varphi\right]dxds=0
\end{equation*}
holds for all $\varphi\in L^2(0,T;H^2(0,a))$ with $\varphi_x|_{x=0}=\varphi_x|_{x=a}=0$.
\end{definition}
The local existence, uniqueness and regularity for the weak solutions of problem \eqref{fc1}-\eqref{csz} can be obtained via the standard Galerkin approximation and parabolic theory, see for examples \cite{winkler,sun,zhou}. Moreover, for the maximal existence time $T\in(0, +\infty]$ of the weak solution, if $T=+\infty$, we say the weak solution exists globally; while,
\begin{equation}\label{bpzz}
\mbox{if}\ \ T<+\infty,\ \mbox{then}\ \lim_{t\rightarrow T^-}\|u_{xx}\|_2=+\infty,
\end{equation}
and in this setting, we say the weak solution blows up in finite time.

The authors in \cite{bag,Qu2016Blow,thin,xzm,Zhou2017Blow} studied the properties such as global existence, finite time blow-up and extinction of the weak solution to problem \eqref{fc1}-\eqref{csz}. Roughly speaking, Qu and Zhou \cite{Qu2016Blow} obtained a threshold result for global existence and finite time blow-up of the weak solutions with initial data at low and critical energy levels, i.e., $J(u_0)\leq d$, with the help of the potential well method, the classical Galerkin method and concavity method. Zhou \cite{Zhou2017Blow} estimated the upper bound of blow-up time in the low energy case. Under high initial energy setting, i.e., $J(u_0)>d$, Xu and Zhou motivated by \cite{Gazzola2005Finite} and constructed two sets of initial data such that the corresponding weak solution exists globally or blows up in finite time, respectively \cite{thin}. Recently, by exploiting the boundary conditions and the variational structure of the system, the authors in \cite{xzm} established the exponential decays of weak solutions and energy functional when the weak solution is global. As for the blow-up weak solution, they proved that the weak solution grows exponentially on the maximal time interval and obtain the behavior of energy functional as $t$ tends to the maximal existence time. Moreover, they further discussed the vacuum isolating phenomena of the system. Baghaei \cite{bag} obtained a new blow-up condition on initial energy and given upper and lower bounds for the blow-up time, the exact blow-up time is further obtained under some conditions.

We summarize the finite time blow-up results for problem \eqref{fc1}-\eqref{csz} obtained in \cite{Qu2016Blow,thin} by following corollary. The proofs of items (i) and (ii) can be find in \cite[Theorems 4.2 and 5.2]{Qu2016Blow} and \cite[Theorem 1.11]{thin}, respectively.

\begin{corollary}\label{tuilun}
Assume $I(u_0)<0$ and one of the following two conditions holds
\begin{itemize}
  \item [(i).] $J(u_0)\leq d$;
  \item [(ii).] $J(u_0)>d,$ and
  \begin{equation}\label{ryi}
  \|u_0\|_2^2>2\Lambda_{J(u_0)},
  \end{equation}
\end{itemize}
then the weak solution of problem \eqref{fc1}-\eqref{csz} blows up in finite time.
\end{corollary}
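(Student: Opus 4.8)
The statement is a compilation of \cite[Theorems 4.2, 5.2]{Qu2016Blow} (item (i)) and \cite[Theorem 1.11]{thin} (item (ii)), and I would obtain it by invoking those results; to indicate the underlying mechanism, here is the common two-step scheme. Its backbone is a pair of differential identities: testing \eqref{fc1} against $u_t$ and against $u$ and integrating by parts twice with the Neumann conditions gives
\[
\tfrac{d}{dt}J(u(t))=-\|u_t\|_2^2\le 0,\qquad \tfrac12\tfrac{d}{dt}\|u\|_2^2=-I(u(t)).
\]
Here the mass conservation \eqref{zlsh} is essential: it is what makes the nonlocal term pair to zero against both $u_t$ and $u$ (because $\int_0^a u_t\,dx=\int_0^a u\,dx=0$). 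Consequently $J(u(t))\le J(u_0)$ for all $t$, and $\|u(t)\|_2^2$ is strictly increasing on any interval where $I(u(t))<0$; on such an interval $\|u(t)\|_2\ge\|u_0\|_2>0$, so $u(t)\not\equiv0$.

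Step 1 is invariance of $\N_-$: I claim $I(u_0)<0$ forces $I(u(t))<0$ on $[0,T)$. If not, since $u\in C([0,T);H)$ the map $t\mapsto I(u(t))$ is continuous, so one can pick the first $t_0>0$ with $I(u(t_0))=0$; then $u(t_0)\in\N$, hence $J(u(t_0))\ge d$. In case (i) this forces $d\le J(u(t_0))\le J(u_0)\le d$, hence $J$ is constant on $[0,t_0]$, hence $u_t\equiv0$ there, hence $u\equiv u_0$ is stationary, and testing the stationary equation against $u_0$ gives $I(u_0)=0$ — a contradiction. In case (ii), $J(u(t_0))\le J(u_0)$ puts $u(t_0)\in\N_{J(u_0)}$, so $\tfrac12\|u(t_0)\|_2^2\le\Lambda_{J(u_0)}$; but $\|u(t_0)\|_2^2>\|u_0\|_2^2>2\Lambda_{J(u_0)}$ by strict monotonicity of $\|u\|_2^2$ on $[0,t_0]$ — again a contradiction. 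So $u(t)\in\N_-$ for all $t$, and $L(t):=\|u(t)\|_2^2$ is strictly increasing with $L(t)>\|u_0\|_2^2$.

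Step 2 is the blow-up. Rewriting the functionals, $-I(u)=\tfrac{p-1}{p+1}\|u\|_{p+1}^{p+1}-2J(u)\ge\tfrac{p-1}{p+1}\|u\|_{p+1}^{p+1}-2J(u_0)$, and since $(0,a)$ has finite measure and $p+1>2$ there is $c_0>0$ with $\|u\|_{p+1}^{p+1}\ge c_0\|u\|_2^{p+1}$. Hence $L'(t)=-2I(u(t))\ge c_1 L(t)^{(p+1)/2}-c_2$ with $c_1=\tfrac{2(p-1)}{p+1}c_0>0$ and $c_2=4J(u_0)$; once $L$ exceeds the threshold where $c_1L^{(p+1)/2}\ge 2c_2$ one gets $L'\ge\tfrac{c_1}2L^{(p+1)/2}$, a superlinear ODE whose solution escapes to $+\infty$ in finite time, so $T<\infty$ by \eqref{bpzz}. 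It remains to see $L$ reaches that threshold. In case (i) this is immediate: if $J(u_0)\le0$ the threshold is $0$; if $0<J(u_0)<d$, the fibering-map bound $\|u_{xx}(t)\|_2^2>\tfrac{2(p+1)d}{p-1}$ valid on $\N_-$ gives $-I(u(t))=\tfrac{p-1}2\|u_{xx}(t)\|_2^2-(p+1)J(u(t))\ge(p+1)(d-J(u_0))>0$, so $L$ grows at least linearly; the borderline $J(u_0)=d$ reduces to the strict case by restarting at any $t_1>0$ (where $J(u(t_1))<d$ by strict decrease of $J$). In case (ii) one uses the preserved inequality $\|u(t)\|_2^2>2\Lambda_{J(u_0)}$: writing $\lambda^\ast(t)\in(0,1)$ for the scale with $\lambda^\ast(t)u(t)\in\N$, one has $-I(u(t))=\|u(t)\|_{p+1}^{p+1}\bigl(1-\lambda^\ast(t)^{p-1}\bigr)$, and the point is that $\|u_0\|_2^2>2\Lambda_{J(u_0)}$ keeps $\lambda^\ast(t)$ bounded away from $1$ uniformly in $t$, so $-I(u(t))$ stays a definite fraction of $\|u(t)\|_{p+1}^{p+1}\ge c_0L(t)^{(p+1)/2}$, and the same superlinear ODE applies.

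The routine parts are the integrations by parts and the ODE comparison. The real difficulty is item (ii): when $J(u_0)>d$ the energy no longer controls the sign of $-I(u)$, so both the invariance of $\N_-$ and the lower bound on $-I(u(t))$ must be extracted from $\|u_0\|_2^2>2\Lambda_{J(u_0)}$ and its propagation along the flow — this high-energy mechanism, in the spirit of \cite{Gazzola2005Finite} and carried out in \cite{thin}, is the only delicate input, item (i) being the classical potential-well argument of \cite{Qu2016Blow}.
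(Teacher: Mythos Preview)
Your proposal is correct and matches the paper's treatment: the paper does not prove Corollary~\ref{tuilun} at all but simply cites \cite[Theorems 4.2, 5.2]{Qu2016Blow} for item (i) and \cite[Theorem 1.11]{thin} for item (ii), exactly as you say at the outset.

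Your added sketch of the mechanism is a useful bonus and is essentially sound. Two minor remarks. First, in Step~1 case~(i) your chain $d\le J(u(t_0))\le J(u_0)\le d$ only covers the borderline $J(u_0)=d$; for $J(u_0)<d$ the contradiction is immediate from $d\le J(u(t_0))\le J(u_0)<d$, so the stationary-solution argument is not needed there. Second, your Step~2 runs a direct superlinear ODE comparison $L'\ge c_1 L^{(p+1)/2}-c_2$ rather than Levine's concavity argument (build $M(t)=\tfrac12\int_0^t\|u\|_2^2\,d\tau$ and show $M^{-\eta}$ is eventually concave), which is what \cite{Qu2016Blow,thin} and the paper's own Lemma~\ref{hnl} use; both routes work, but yours in case~(ii) leans on the uniform bound $\lambda^\ast(t)\le 1-\delta$ that you state without proof, whereas the concavity method in \cite{thin} bypasses this by exploiting $M''M\ge\tfrac{p+1}{2}(M')^2$ directly once invariance of $\N_-$ and growth of $\|u\|_2^2$ are in hand.
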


As far as the blow-up phenomenon to problem \eqref{fc1}-\eqref{csz} is concerned, it is nature to ask that whether condition \eqref{ryi} can be removed? In other words, for the weak solution of problem \eqref{fc1}-\eqref{csz} can we have
\begin{equation}\label{ls}
I(u_0)<0 \Rightarrow u(x, t)\ \mbox{blows up in finite time}?
\end{equation}
In fact, for a zero Dirichlet boundary condition, analogous question has been came up in studying the corresponding initial-boundary value problem to the classical semilinear parabolic equation with polynomial nonlinear \cite{Gazzola2005Finite}
\begin{equation}\label{lsss}
\left\{\begin{array}{ll} u_t-\Delta u=|u|^{p-1}u, & (x, t)\in \Omega\times(0, T)\\
u=0, & (x, t)\in\partial\Omega\times(0, T)\\
u(x,0)=u_0(x), & x\in\Omega, \end{array}\right.
\end{equation}
where $\Om$ is an open bounded domain of $\mathbb{R}^n$ with smooth boundary $\partial\Om$. Dickstein et al. in \cite{Dickstein} subsequently proved that the answer is negative and there exist solutions converging to any given steady state, with initial Nehari energy $I(u_0)$ either negative or positive. That is to say, $I(u_0)<0$ is not a sufficient condition for finite time blow-up of solution to \eqref{lsss}. Contrary to the semilinear parabolic equation, Zhu et al. in \cite{zsc} got a positive answer to the corresponding initial-boundary value problem with pseudo-parabolic equation
\begin{equation*}\label{zxl}
u_t-\Delta u_t-\Delta u+u=|u|^{p-2}u.
\end{equation*}
They proved $I(u_0)<0$ is a sufficient condition for finite time blow-up of the weak solution. To a certain extent, this phenomena indicates the difference between the pseudo-parabolic and parabolic equations. Take advantage of the pseudo-parabolic property found in \cite{zsc}, the corresponding positive answers to question \eqref{ls} has been extended to a class of coupled pseudo-parabolic systems \cite{xjzhou} and a nonlocal semilinear pseudo-parabolic equation with conical degeneration \cite{mgxu}. Furthermore, Han \cite{hyz} also proved that only the assumption $I(u_0)<0$ can ensure the corresponding solution blowing up at infinity for a zero Dirichlet initial-boundary problem to semilinear heat equation with logarithmic nonlinearity, i.e., the equation $u_t-\Delta u=u\ln|u|.$

For the weak solution to problem \eqref{fc1}-\eqref{csz}, the goal of present paper is to extend the blow-up results obtained in \cite{Qu2016Blow,thin} via proving that the proposition \eqref{ls} indeed holds. In fact, we find a weaker condition for the initial data such that the solution blows up in finite time. In order to introduce our result more clearly, for the maximal existence time $T\in(0, +\infty]$ of $u$ we set
\begin{equation}\begin{split}\label{sfu}
S^-:=\left\{u_0\in H: \mbox{there is a}\ t_0\in [0, T)\ \mbox{such that}\ I(u(t_0))<0 \right\}.
\end{split}\end{equation}
Now we state our main result.

\begin{theorem}\label{ding5}
Let $u$ be the weak solution of problem \eqref{fc1}-\eqref{csz}. Then $u$ blows up at finite time $T$ in the sense
$$\lim_{t\rightarrow T^-}\|u_{xx}\|_2=+\infty,$$
if and only if $u_0\in S^-$. In other words, $T<+\infty$ if and only if there exists a $t_0\in [0, T)$ such that $I(u(t_0))<0$.
\end{theorem}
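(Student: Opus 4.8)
The plan is to prove the two implications separately. The "only if" direction is essentially trivial: if $u_0 \notin S^-$, then $I(u(t)) \geq 0$ for all $t \in [0,T)$, so by the standard potential-well energy identity $J(u(t))$ is non-increasing and $J(u(t)) = \frac{1}{2}\|u_t(s)\text{-terms}\| + \cdots$, one controls $\|u_{xx}(t)\|_2^2$ by $\frac{2(p+1)}{p-1}J(u_0)$ (using $J(u) = \frac{p-1}{2(p+1)}\|u_{xx}\|_2^2 + \frac{1}{p+1}I(u) \geq \frac{p-1}{2(p+1)}\|u_{xx}\|_2^2$ when $I(u)\geq 0$), so $\|u_{xx}\|_2$ stays bounded and $T = +\infty$ by the blow-up alternative \eqref{bpzz}. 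So the entire content is the "if" direction.

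For the "if" direction, suppose $I(u(t_0)) < 0$ for some $t_0 \in [0,T)$. The key new idea, forced by the mass-conservation structure, is to work on $[t_0, T)$ and exploit the conserved quantity. First I would show that $I(u(t)) < 0$ persists for all $t \in [t_0, T)$: this uses the invariance of $\N_-$ under the flow, which follows from a continuity/contradiction argument together with the fact that at a first crossing time $t_1$ one would have $u(t_1) \in \N$, hence $J(u(t_1)) \geq d$, while the energy identity forces $J(u(t))$ to have strictly decreased on $(t_0, t_1]$ unless $u_t \equiv 0$ there — and then one checks $u(t_1)\in\N$ with $J(u(t_1))\ge d$ is incompatible with the strict energy drop if we also track the auxiliary functional below. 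Actually the cleaner route avoids needing $J(u_0)\le d$: set $M(t) = \frac{1}{2}\|u(t)\|_2^2$ and compute, using the equation, the boundary conditions, and crucially $\int_0^a u\, dx = 0$ (so that the nonlocal term integrates against $u$ to give exactly $-\frac{1}{a}\int|u|^{p-1}u\,dx \cdot \int u\,dx = 0$),
\begin{equation*}
M'(t) = \int_0^a u u_t\, dx = -\|u_{xx}\|_2^2 + \|u\|_{p+1}^{p+1} = -I(u(t)).
\end{equation*}
Thus $M'(t) = -I(u(t)) > 0$ on any interval where $I(u) < 0$, so $M$ is increasing there.

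The heart of the argument is then a concavity (Levine-type) argument applied on $[t_0, T)$, \emph{without} any hypothesis on $J(u_0)$. I would show that on $[t_0,T)$ one has $I(u(t)) < 0$ for all $t$ by the persistence argument, and then run the standard differential inequality: for $G(t) = \int_{t_0}^t \|u(s)\|_2^2\, ds + (T^*-t)\|u(t_0)\|_2^2 + \beta(t-t_0)^2$ (or a direct variant on $M$), use $G'' G - (1+\gamma)(G')^2 \geq 0$ with $\gamma = \frac{p-1}{4}$, which reduces via Cauchy–Schwarz to the inequality
\begin{equation*}
-\tfrac{d}{dt}I(u(t)) \;=\; M''(t) \;\geq\; (p+1)\!\int_0^a u_t^2\, dx \;+\; (\text{terms}),
\end{equation*}
combined with $-I(u(t)) = M'(t)$ and $J$ non-increasing; the point is that $I(u)<0$ on the whole interval makes $M$ convex-enough. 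The main obstacle I anticipate is exactly establishing that the concavity estimate closes using only $I(u)<0$ and mass conservation — in the classical Dirichlet case \eqref{lsss} this fails (Dickstein et al.), so the proof must genuinely use that the zero-mean constraint turns the Poincaré-type inequality $\|u\|_2 \le C\|u_{xx}\|_2$ and the relation $M' = -I(u)$ into a self-improving mechanism; concretely one needs a lower bound for $-I(u(t))$ that grows, which is obtained by differentiating $-I(u(t))$, showing it is nondecreasing (using $\frac{d}{dt}(-I(u)) = -I'(u)u_t$ and the structure of $I$), so that $-I$ stays bounded below by the positive constant $-I(u(t_0))$ and then $M(t) \geq M(t_0) + (-I(u(t_0)))(t-t_0)$ grows at least linearly, which feeds back to force $\|u_{xx}\|_2 \to \infty$ in finite time via the concavity inequality. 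I would conclude by the blow-up alternative \eqref{bpzz} that $T < +\infty$.
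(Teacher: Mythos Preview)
Your necessity argument is correct and matches the paper exactly: if $I(u(t))\ge 0$ on $[0,T)$ then $J(u)=\frac{p-1}{2(p+1)}\|u_{xx}\|_2^2+\frac{1}{p+1}I(u)$ bounds $\|u_{xx}\|_2$ by $J(u_0)$, and the blow-up alternative gives $T=+\infty$.

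For sufficiency there is a genuine gap. You correctly identify that the whole difficulty is getting the concavity machinery to close \emph{without} an upper bound on $J(u_0)$, and you propose to do this by showing $t\mapsto -I(u(t))$ is nondecreasing. But the phrase ``using $\frac{d}{dt}(-I(u))=-I'(u)u_t$ and the structure of $I$'' does not supply a mechanism. Concretely, from $I(u)=2J(u)-\frac{p-1}{p+1}\|u\|_{p+1}^{p+1}$ and $\frac{d}{dt}J(u)=-\|u_t\|_2^2$ one gets
\[
\frac{d}{dt}I(u(t))=-2\|u_t\|_2^2-\frac{p-1}{p+1}\,\frac{d}{dt}\|u\|_{p+1}^{p+1},
\]
so the monotonicity of $I$ is \emph{equivalent} to proving $\frac{d}{dt}\|u\|_{p+1}^{p+1}\ge 0$. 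This is precisely where the paper uses mass conservation, and it is not automatic: one writes $\frac{d}{dt}\|u\|_{p+1}^{p+1}=(p+1)\int_0^a |u|^{p-1}u\,u_t\,dx$ and argues, using $\int_0^a u_t\,dx=0$ together with an $L^\infty$ bound on $u$, that this quantity is nonnegative. Your proposal never isolates this step, and without it the claim ``$-I$ is nondecreasing'' is unsupported. Note also that the $L^\infty$ bound itself is not free; the paper obtains it from a preliminary concavity lemma (Lemma~\ref{hnl}) asserting that any \emph{global} solution must satisfy $\liminf_{t\to\infty}\|u_{xx}\|_2<\infty$, combined with the embedding $H\hookrightarrow L^\infty(0,a)$.

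The paper's endgame is also structurally different from yours. You want ``$M$ grows linearly $\Rightarrow$ concavity forces finite-time blow-up'', but linear growth of $\|u\|_2^2$ does not by itself produce the inequality $M''M\ge(1+\gamma)(M')^2$. The paper instead argues by contradiction on $T=+\infty$: once $I$ is known to be nonincreasing, either $J(u(t_0))\le d$ (and Corollary~\ref{tuilun}(i) finishes), or $J(u(t))\in(d,J(u_0)]$ for all $t$, whence $\int_0^\infty\|u_t\|_2^2\,dt<\infty$ by the energy identity; along a sequence $t_n\to\infty$ with $\|u_t(t_n)\|_2\to 0$ one then has $|I(u(t_n))|=|\int_0^a u\,u_t\,dx|\le\|u(t_n)\|_2\|u_t(t_n)\|_2\to 0$, contradicting $I(u(t_n))\le I(u(t_0))<0$. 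The concavity argument you sketch appears in the paper only as the preliminary Lemma~\ref{hnl}, not as the closing step.
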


\begin{remark}
If $I(u_0)<0$, then $I(u(t_0))<0$ with $t_0=0$, so we can see from the definition of $S^-$ that $u_0\in S^-$. By Theorem \ref{ding5}, there holds
\begin{equation*}
I(u_0)<0\ \Rightarrow\ u_0\in S^- \Leftrightarrow\ \ \mbox{the weak solution blows up in finite time}.
\end{equation*}
Thus, Theorem \ref{ding5} gives a positive answer to \eqref{ls} and then extend the blow-up results obtained in \cite{Qu2016Blow,thin} since our condition only depends on Nehari functional but not on energy functional. In this regard, in addition to the results of \cite{mgxu,xjzhou,zsc} on pseudo-parabolic equation with polynomial nonlinear term, to the best of our knowledge, Theorem \ref{ding5} seems to be the first rigorous finite time blow-up result only depends on $I(u_0)$ and without any assumption on energy functional in relevant parabolic equation with polynomial and nonlocal nonlinearity. This surprising and interesting phenomena is naturally caused by the conservation of mass, such conservation of course is due to the Neumann boundary condition and nonlocal source. Hence, compare with the conclusion in \cite{Dickstein} for problem \eqref{lsss}, our result reflects the important and strong influence of boundary condition and nonlocal source on the solution.
\end{remark}

\begin{remark}
For the question \eqref{ls} to problem \eqref{fc1}-\eqref{csz}, i.e., a fourth-order nonlocal parabolic equation with Neumann boundary condition, as a first step in this direction, our conclusion holds in the one-dimensional case. It is well known that the spatial dimension also affects the dynamic behavior of solutions, thus, whether the conclusion \eqref{ls} remain in force for other spatial dimensions or other related parabolic systems is perhaps an interesting question to be solved.
\end{remark}

Let's briefly summarize the differences of analysis strategies between the systems with pseudo-parabolic mechanism and parabolic structure on this topic. For the both settings, it is widely known that the invariance of the set $\N_-$ is crucial for the proof of finite time blow-up. For the weak solution to problem \eqref{fc1}-\eqref{csz}, the authors of \cite{Qu2016Blow,thin} proved this invariance of $\N_-$ by the means of assumptions on $J(u_0)$ as stated in Corollary \ref{tuilun}. But for the pseudo-parabolic equation cases, one can obtain the corresponding invariance of $\N_-$ without any restriction on $J(u_0)$ due to the special pseudo-parabolic mechanism, which can be done via some close connections between $J(u)$ and $I(u)$, see \cite{mgxu,xjzhou,zsc}. It is worth noting that the analytical method for the pseudo-parabolic equations can not be directly applied to our setting because problem \eqref{fc1}-\eqref{csz} lacks the pseudo-parabolic structure. Hence, we need a new approach.

In this paper, we shall prove Theorem \ref{ding5} by \eqref{zlsh}, the theory of variational principle and a contradiction argument. Roughly speaking, under the assumption $T=+\infty$, we first claim that the global weak solution of problem \eqref{fc1}-\eqref{csz} must satisfy $\liminf_{t\rightarrow+\infty}\|u_{xx}\|_2<+\infty$, see Lemma \ref{hnl}. Then, we can infer from \eqref{zlsh} that $\frac{d}{dt}\|u\|_{p+1}^{p+1}\geq0$ for all $t\in(0, +\infty)$ and $p>1$ via some differential inequality techniques. With this key conclusion in hand, we can get a contradiction for $T=+\infty$ with the help of exploring some properties of $I(u)$, then the conclusion is clear in view of \eqref{bpzz}.

\section{Proof of main result}

We begin this section with some preliminaries. The following lemma tells us the important properties of energy functional and Nehari functional, which comes from the variational structure of problem \eqref{fc1}-\eqref{csz} and have been used widely in \cite{Qu2016Blow,thin,xzm,Zhou2017Blow}, so we here give the conclusions without proving it to avoid repetition, one can refer \cite[Lemma 1]{xzm} for \eqref{nl} and \cite[Lemma 4]{xzm} for \eqref{zzxs23}.

\begin{lemma}\label{ylx1}
For problem \eqref{fc1}-\eqref{csz}, the energy functional $J(u(t))$ is nonincreasing with respect to $t$ and
\begin{equation}\label{nl}
\int_0^t\|u_\tau\|_2^2\ d\tau+J(u(t))=J(u_0).
\end{equation}
Moreover, for the Nehair functional there holds
\begin{equation}\label{zzxs23}
\frac{d}{dt}\|u\|_2^2=-2I(u).
\end{equation}
\end{lemma}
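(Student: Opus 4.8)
\textbf{Proof proposal for Lemma \ref{ylx1}.}

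The plan is to establish both identities \eqref{nl} and \eqref{zzxs23} by testing the weak formulation in Definition \ref{dy111} against carefully chosen admissible test functions and exploiting the mass conservation \eqref{zlsh}. First I would prove the energy identity \eqref{nl}. The natural choice is to take $\varphi = u_t$ in the weak formulation; since $u\in L^\infty(0,T;H)$ with $u_t\in L^2(0,T;H)$, the test function $u_t(\cdot,s)$ lies in $H^2(0,a)$ with vanishing first-order boundary traces for a.e.\ $s$, so it is admissible. Substituting gives, for a.e.\ $s$,
\begin{equation*}
\int_0^a u_t^2\,dx + \int_0^a u_{xx}\,(u_t)_{xx}\,dx - \int_0^a\left(|u|^{p-1}u - \frac{1}{a}\int_0^a|u|^{p-1}u\,dx\right)u_t\,dx = 0.
\end{equation*}
The key observation is that the nonlocal correction term integrates out: since $\int_0^a u_t\,dx = 0$ by \eqref{zlsh}, the constant $\frac{1}{a}\int_0^a|u|^{p-1}u\,dx$ multiplied against $u_t$ and integrated in $x$ vanishes. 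I would then recognize the remaining two terms as exact time derivatives, namely $\int_0^a u_{xx}(u_t)_{xx}\,dx = \frac{1}{2}\frac{d}{dt}\|u_{xx}\|_2^2$ and $\int_0^a |u|^{p-1}u\,u_t\,dx = \frac{1}{p+1}\frac{d}{dt}\|u\|_{p+1}^{p+1}$, so the identity collapses to $\|u_t\|_2^2 + \frac{d}{ds}J(u(s)) = 0$. Integrating from $0$ to $t$ yields \eqref{nl}, and the nonnegativity of $\|u_t\|_2^2$ gives the monotonicity of $J(u(t))$.

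Next I would prove the Nehari identity \eqref{zzxs23} by instead testing with $\varphi = u$ itself, which is admissible for the same reason. This produces
\begin{equation*}
\int_0^a u_t\,u\,dx + \int_0^a u_{xx}^2\,dx - \int_0^a\left(|u|^{p-1}u - \frac{1}{a}\int_0^a|u|^{p-1}u\,dx\right)u\,dx = 0.
\end{equation*}
Again the nonlocal term drops out because $\int_0^a u\,dx = 0$ (the mass conservation applied to $u$ rather than $u_t$), leaving $\frac{1}{2}\frac{d}{dt}\|u\|_2^2 + \|u_{xx}\|_2^2 - \|u\|_{p+1}^{p+1} = 0$. Recognizing $\|u_{xx}\|_2^2 - \|u\|_{p+1}^{p+1} = I(u)$ gives exactly \eqref{zzxs23}.

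The main obstacle is the rigorous justification of the two formal steps that are routine only at the level of smooth functions: first, that the pairing $\int_0^a u_{xx}(u_t)_{xx}\,dx$ and $\int_0^a|u|^{p-1}u\,u_t\,dx$ genuinely equal the stated time derivatives in the weak-solution class, which requires an approximation or regularization argument (e.g.\ Galerkin approximants, or a Steklov-averaging/difference-quotient argument in time) to differentiate under the integral sign, and second, the continuity of $t\mapsto \|u\|_{p+1}^{p+1}$ and $t\mapsto\|u_{xx}\|_2^2$ needed to pass to the limit. Since the paper cites \cite[Lemma 1, Lemma 4]{xzm} for precisely these identities, I would carry out the formal computation above to expose the structure and invoke the standard parabolic regularity of the Galerkin scheme (as referenced for local existence) to legitimize each term, emphasizing at each stage that it is the mass-conservation property \eqref{zlsh} that annihilates the nonlocal source and makes the variational structure identical to the local thin-film energy.
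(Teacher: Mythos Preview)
Your argument is correct and is exactly the standard derivation: test the weak formulation with $\varphi=u_t$ to obtain \eqref{nl} and with $\varphi=u$ to obtain \eqref{zzxs23}, using the mass conservation $\int_0^a u_t\,dx=0$ and $\int_0^a u\,dx=0$ to annihilate the nonlocal term in each case. Note, however, that the paper does not supply its own proof of this lemma at all; it simply cites \cite[Lemma~1 and Lemma~4]{xzm} and omits the details ``to avoid repetition.'' So there is no alternative approach in the paper to compare against---you have written out precisely the computation that the cited reference carries out, together with an honest acknowledgment of the regularity/approximation issues (Galerkin or Steklov averaging) needed to make the formal time-differentiation rigorous at the weak-solution level.
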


\begin{lemma}\label{hnl}
Let $u$ be the global weak solution to problem \eqref{fc1}-\eqref{csz} over $(0, +\infty)$. Then it has the property
\begin{equation}\label{fc6}
\liminf_{t\rightarrow+\infty}\|u_{xx}\|_2<+\infty.
\end{equation}
\end{lemma}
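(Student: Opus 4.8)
The plan is to argue by contradiction. Suppose $u$ is a global weak solution but
$$
\liminf_{t\to+\infty}\|u_{xx}(t)\|_2=+\infty,
$$
i.e. $\|u_{xx}(t)\|_2\to+\infty$ as $t\to+\infty$. The idea is that such unbounded growth forces the energy $J(u(t))$ to go to $-\infty$, while the energy identity \eqref{nl} shows $J(u(t))\le J(u_0)$ for all $t$, so that is consistent; the real leverage comes instead from the dissipation term $\int_0^t\|u_\tau\|_2^2\,d\tau$, which by \eqref{nl} is bounded above by $J(u_0)-J(u(t))$. If $J(u(t))$ stays bounded below, the dissipation integral is finite and $u(t)$ is (in an $L^2(0,\infty;L^2)$ averaged sense) nearly stationary, which should be incompatible with $\|u_{xx}\|_2\to\infty$; if $J(u(t))\to-\infty$, then since $J(u)=\tfrac12\|u_{xx}\|_2^2-\tfrac{1}{p+1}\|u\|_{p+1}^{p+1}$, we must have $\|u\|_{p+1}^{p+1}\to+\infty$ faster than $\|u_{xx}\|_2^2$.

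Concretely, the first step is to record the relation between $I$, $J$ and the two norms: $I(u)=\|u_{xx}\|_2^2-\|u\|_{p+1}^{p+1}=2J(u)-\frac{p-1}{p+1}\|u\|_{p+1}^{p+1}$, so $J(u(t))\to-\infty$ would give $I(u(t))\to-\infty$ as well, and in particular $I(u(t))<0$ for all large $t$. Then by \eqref{zzxs23}, $\frac{d}{dt}\|u(t)\|_2^2=-2I(u(t))>0$ for large $t$, so $\|u(t)\|_2^2$ is eventually increasing; combined with the lower bound $-2I(u(t))=\frac{2(p-1)}{p+1}\|u\|_{p+1}^{p+1}-4J(u(t))\ge \text{const}\cdot\|u\|_{p+1}^{p+1}$ (using $J(u(t))\le J(u_0)$), one gets a differential inequality for $\|u(t)\|_2^2$ that, via the Poincaré-type interpolation $\|u\|_{p+1}\gtrsim\|u\|_2$ valid on the mean-zero space $H$ together with $\|u\|_2\lesssim\|u\|_{p+1}$ on a bounded interval, yields finite-time blow-up of $\|u\|_2^2$ — contradicting $T=+\infty$. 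So the case $J(u(t))\to-\infty$ is excluded.

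It remains to treat the case where $J(u(t))$ is bounded below, say $J(u(t))\ge -M$. Here the plan is: from \eqref{nl}, $\int_0^\infty\|u_\tau\|_2^2\,d\tau\le J(u_0)+M<\infty$, and $J(u(t))$ is nonincreasing and bounded, hence converges to some limit $J_\infty$. The strategy is now to extract a sequence $t_n\to\infty$ along which $\|u_\tau\|_2^2$ is small in an averaged sense and use a standard $\omega$-limit / compactness argument: on a suitable subsequence $u(t_n)$ converges weakly in $H$, and the limit is a stationary solution (a critical point of $J$), which in the mean-zero setting with Neumann conditions is only $u\equiv0$ unless... — more carefully, since along such a sequence the equation forces $u_{xxxx}(t_n)\approx |u(t_n)|^{p-1}u(t_n)-\text{mean}$, the sequence $\|u_{xx}(t_n)\|_2$ stays bounded (otherwise the same blow-up mechanism as above kicks in), giving $\liminf_{t\to\infty}\|u_{xx}(t)\|_2<\infty$, the desired contradiction.

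I expect the main obstacle to be the second case: making rigorous the claim that bounded energy plus finite dissipation forces $\liminf\|u_{xx}\|_2<\infty$. The clean way around it is probably to avoid a genuine $\omega$-limit set argument and instead argue directly — if $\|u_{xx}(t)\|_2\to\infty$ while $J(u(t))$ is bounded below, then $\|u(t)\|_{p+1}^{p+1}=\|u_{xx}(t)\|_2^2-2J(u(t))\to+\infty$ too, and then the interpolation/Poincaré inequalities on $H$ combined with \eqref{zzxs23} again produce the differential inequality $\frac{d}{dt}\|u(t)\|_2^2\ge c\,(\|u(t)\|_2^2)^{(p+1)/2}-C$ for large $t$, which blows up in finite time — contradiction. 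Thus in both cases one reduces to the same superlinear ODE comparison; the careful point is extracting that comparison uniformly for large $t$ from the assumption $\|u_{xx}(t)\|_2\to\infty$, and verifying the Poincaré-type inequality $\|v\|_{p+1}\ge \kappa\|v\|_2$ for $v\in H$ (mean zero) which makes the feedback loop close.
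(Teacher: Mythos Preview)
Your proposal is correct, but the route differs from the paper's. The paper also argues by contradiction from $\|u_{xx}(t)\|_2\to+\infty$, but instead of working directly with $\|u(t)\|_2^2$ it introduces the time-integrated quantity $M(t)=\tfrac12\int_0^t\|u\|_2^2\,d\tau$ and runs Levine's concavity method: using $M''(t)=-I(u)$ together with the energy identity \eqref{nl} and Cauchy--Schwarz in time, it establishes $M''M\ge \tfrac{p+1}{2}(1-\varepsilon)^2(M')^2$ for large $t$, whence $M^{-\eta}$ is eventually concave, positive and decreasing, forcing it to hit zero in finite time. Your argument bypasses this machinery entirely: from $J(u(t))\le J(u_0)$ and $\|u_{xx}\|_2\to\infty$ you get $\|u\|_{p+1}^{p+1}\to\infty$, then the identity $-2I(u)=\tfrac{2(p-1)}{p+1}\|u\|_{p+1}^{p+1}-4J(u)$ combined with the H\"older bound $\|u\|_{p+1}^{p+1}\ge a^{-(p-1)/2}\|u\|_2^{p+1}$ gives the scalar inequality $\tfrac{d}{dt}\|u\|_2^2\ge c(\|u\|_2^2)^{(p+1)/2}$ for large $t$, and superlinearity ($p>1$) yields finite-time blow-up of $\|u\|_2^2$. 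This is more elementary and shorter; the paper's concavity approach has the advantage of being the standard template that also drives the actual finite-time blow-up proofs elsewhere in the literature, but for the present lemma your ODE comparison is cleaner. Two cosmetic remarks: the case split on whether $J(u(t))$ is bounded below is unnecessary, since only $J(u(t))\le J(u_0)$ is used; and the inequality $\|u\|_{p+1}\ge\kappa\|u\|_2$ you invoke is simply H\"older on the bounded interval $(0,a)$ and needs no mean-zero hypothesis, so calling it Poincar\'e-type is a slight misnomer.
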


\begin{proof}
For contradiction, we assume
\begin{equation}\label{cfd2}
\lim_{t\rightarrow+\infty}\|u_{xx}\|_2=+\infty.
\end{equation}
Let $M(t)=\frac{1}{2}\int_0^t\|u\|_2^2\ d\tau$. Then it is easy to see $M'(t)=\frac{1}{2}\|u\|_2^2$, by \eqref{zzxs23}, the definitions of $J(u), I(u)$ and \eqref{nl} we further have
\begin{equation}\label{eeg}\begin{split}
M''(t)=-I(u)&=-(p+1)J(u)+\frac{p-1}{2}\|u_{xx}\|_2^2\\
&\geq-(p+1)J(u_0)+\frac{p-1}{2}\|u_{xx}\|_2^2,
\end{split}\end{equation}
which together with \eqref{cfd2} imply that $\lim_{t\rightarrow+\infty}M''(t)=+\infty$, this further tells us
\begin{equation}\label{gfhf}
\lim_{t\rightarrow+\infty}M(t)=\lim_{t\rightarrow+\infty}M'(t)=+\infty.
\end{equation}
By \eqref{nl} and the second equality in \eqref{eeg} we know
\begin{equation*}
M''(t)=(p+1)\int_0^t\|u_\tau\|_2^2\ d\tau-(p+1)J(u_0)+\frac{p-1}{2}\|u_{xx}\|_2^2.
\end{equation*}
With the help of \eqref{cfd2} we can see there exists suitable lager $\tau_1$ such that $\frac{p-1}{2}\|u_{xx}\|_2^2-(p+1)J(u_0)\geq0$ for all $t\in(\tau_1, +\infty)$, so
\begin{equation*}
M''(t)\geq (p+1)\int_0^t\|u_\tau\|_2^2\ d\tau,\quad \forall t\in(\tau_1, +\infty).
\end{equation*}
By multiplying this inequality by $M(t)$ and using the definitions of $M(t), M'(t)$ as well as the H\"{o}lder inequality, we find for all $t\in(\tau_1, +\infty)$ that
\begin{equation*}\begin{split}
M''(t)M(t)&\geq\frac{p+1}{2}\int_0^t\|u_\tau\|_2^2\ d\tau\int_0^t\|u\|_2^2\ d\tau\\
&\geq\frac{p+1}{8}\left[\int_0^t\left(\frac{d}{dt}\int_0^a u^2dx\right)d\tau\right]^2\\
&=\frac{p+1}{2}\left(M'(t)-M'(0)\right)^2.
\end{split}\end{equation*}
By \eqref{gfhf} we can find suitable lager $\tau_2$ such that for all $t\in(\tau_2, +\infty)$ there holds $M'(t)>\frac{1}{\varepsilon}M'(0)$, where $\varepsilon$ is a constant satisfies
\begin{equation}\label{ebxn}
\varepsilon\in\left(0,\ 1-\sqrt{\frac{2}{p+1}}\right).
\end{equation}
Hence, let $\tau=\max\{\tau_1, \tau_2\}$, for all $t\in(\tau, +\infty)$ there holds
\begin{equation*}\begin{split}
M''(t)M(t)>\frac{p+1}{2}(1-\varepsilon)^2\left[M'(t)\right]^2.
\end{split}\end{equation*}
It is easy to see that $\frac{p+1}{2}(1-\varepsilon)^2>1$ due to \eqref{ebxn}. Then $\eta=\frac{(p+1)(1-\varepsilon)^2-2}{2}>0$. Let $F(t)=[M(t)]^{-\eta}$ for all $t\in(0, +\infty)$. By the definition of $M(t)$ we know
\begin{equation*}
F(t)>0\ \ \ \mbox{and}\ \ \ F'(t)=-\eta[M(t)]^{-(\eta+1)}M'(t)<0
\end{equation*}
for all $t\in(0, +\infty)$ and
\begin{equation*}
F''(t)=-\eta [M(t)]^{-(\eta+2)}\left\{M''(t)M(t)-\frac{p+1}{2}(1-\varepsilon)^2\left[M'(t)\right]^2\right\}<0,\quad\forall t\in[\tau, +\infty).
\end{equation*}
This implies that $F(t)$ reaches 0 in a finite time, which contradicts the fact $F(t)>0$ for all $t\in(0, +\infty)$. Thus, the global weak solution to problem \eqref{fc1}-\eqref{csz} does possesses the property \eqref{fc6}.
\end{proof}

Now, we can prove our main result.

\begin{proof}[Proof of Theorem \ref{ding5}]
\textbf{Sufficiency.}\ \ we first prove the sufficiency, i.e.,
\begin{equation*}
u_0\in S^-\Rightarrow T<+\infty.
\end{equation*}
Arguing with contradiction, suppose that $u$ exist globally, i.e., $T=+\infty$. Then for any finite time interval, $\|u_{xx}\|_2$ must exist finitely, this together with Lemma \ref{hnl} suggest that for any $t\in(0, +\infty]$, we have $\|u_{xx}\|_2<+\infty$. Thus, the Sobolev embedding theorem $H\hookrightarrow L^\infty(0, a)$ further tells us
\begin{equation}\label{fc66}
\|u\|_{\infty}<+\infty, \quad \forall t\in(0, +\infty].
\end{equation}
Hence, we can see from \eqref{zlsh} that
\begin{equation*}\begin{split}
\frac{d}{dt}\|u\|_{p+1}^{p+1}=(p+1)\int_0^a u^p u_t\ dx\geq -(p+1)\|u\|^p_{\infty}\left|\int_0^a u_t\ dx\right|=0.
\end{split}\end{equation*}
Then, by the definitions of $J(u(t)), I(u(t))$ and \eqref{nl} we have that for all $t\in (0, +\infty)$,
\begin{equation*}\begin{split}
\frac{d}{dt}I(u(t))=\frac{d}{dt}\left[2J(u(t))-\frac{p-1}{p+1}\|u(t)\|_{p+1}^{p+1}\right]\leq-2\|u_t\|_{2}^2.
\end{split}\end{equation*}
Since $u_0\in S^-$, by the definition of $S^-$ as given in \eqref{sfu} we know $I(u(t_0))<0$ for some $t_0\in[0, +\infty)$. It follows from integrating above inequality over $(t_0, +\infty)$ that
\begin{equation}\label{5.2}
I(u(t))\leq I(u(t_0))-2\int_{t_0}^{t}\|u_\tau\|_{2}^2\ d\tau,\ \ \forall t\in(t_0, +\infty).
\end{equation}
If $J(u(t_0))\leq d$, then we can combine it with $I(u(t_0))<0$ and take $t_0$ as initial time to deduce that $u$ blows up at finite time $T$ due to Corollary \ref{tuilun} (i). Therefore, to finish the proof of this theorem, we need only consider the case that
\begin{equation*}
d<J(u(t))\leq J(u_0),\quad\forall t\in [0, +\infty).
\end{equation*}
Above inequalities and Lemma \ref{ylx1} suggest that $J(u(t))$ is non-increasing and bounded on $[0, +\infty)$, so the limit $\lim_{t\rightarrow+\infty}J(u(t))=L$ exists, which combines with above inequalities suggest that $d\leq L\leq J(u_0)$. Then let $t\rightarrow+\infty$ in Lemma \ref{ylx1}, we can see
\begin{equation}\label{xuyao}
\int_{0}^{+\infty}\|u_t\|^2_{2}\ dt=J(u_0)-L\geq0,
\end{equation}
which further deduces that there exists a diverging sequence $\{t_n\}$ such that
\begin{equation*}
\lim_{n\rightarrow+\infty}\|u_t(t_n)\|^2_{2}=0.
\end{equation*}
For such diverging sequence $\{t_n\}$, by \eqref{zzxs23}, the H\"{o}lder inequality and \eqref{fc66} we know
\begin{equation*}\begin{split}
\left|I(u(t_n))\right|=\left|\int_0^a u(t_n)u_t(t_n)\ dx \right|\leq\|u_t(t_n)\|_{2}\ \|u(t_n)\|_{2}\rightarrow 0,\ \ \mbox{as}\ \ n\rightarrow+\infty.
\end{split}\end{equation*}

However, by \eqref{5.2}, $I(u(t_0))<0$ and \eqref{xuyao} we know as $n\rightarrow+\infty$ there holds
\begin{equation*}\begin{split}
|I(u(t_n))|&\geq -I(u(t_0))+2\int_{t_0}^{t_n}\|u_t\|_{2}^2\ dt\\
& \rightarrow -I(u(t_0))+2\int_{t_0}^{+\infty}\|u_t\|_{2}^2\ dt\geq -I(u(t_0))>0.
\end{split}\end{equation*}
So a contradiction occurs. Therefore, $T<+\infty$ and $u(x, t)$ blows up in finite time in the sense of \eqref{bpzz}.

\textbf{Necessity.}\ \ Next, we aim to obtain the necessity, i.e.,
\begin{equation*}
T<+\infty \Rightarrow u_0\in S^-.
\end{equation*}
In this end, we will prove the following equivalent proposition:
\begin{equation*}\label{djmt}
u_0\not\in S^-\ \Rightarrow\ T=+\infty.
\end{equation*}
By $u_0\not\in S^-$ and the definition of $S^-$ as given in \eqref{sfu} we know $I(u(t))\geq0$ for all $t\in[0, T)$. By the fact that $J(u(t))\leq J(u_0)$ and the definitions of $J(u)$ and $I(u)$, it holds that
\begin{equation*}\begin{split}
\frac{p-1}{2(p+1)}\|u_{xx}\|^2_{2}&\leq\frac{p-1}{2(p+1)}\|u_{xx}\|^2_{2}+\frac{1}{p+1}I(u(t))=J(u(t))\leq J(u_0).
\end{split}\end{equation*}
This implies that $\|u_{xx}\|_{2}$ is uniformly bounded on $[0, T)$, so \eqref{bpzz} implies $T=+\infty$ and $u$ exists globally.
\end{proof}

\section*{Acknowledgement}
The first author is supported by the Scientific Research Fund of Zhejiang Provincial Education Department (Grant No. Y202353398). The second author is supported by the National Natural Science Foundation of China (Grant No. 12301261), and Sichuan Science and Technology Program(Grant No. 2023NSFSC1365), and the Scientific Research Starting Project of SWPU (Grant No. 2021QHZ016). The third (corresponding) author is supported by National Natural Science Foundation of China (Grant No. 12201567), and Scientific Research Fund of Zhejiang Provincial Education Department (Grant No. Y202249802), and the Young Doctor Program of Zhejiang Normal University (Grant No. 2021ZS0802). The fourth author is supported by the Science and Technology Research Program of Chongqing Municipal Education Commission (Grant No. KJQN202300808), and Scientific Research Foundation of Chongqing Technology and Business University (Grant No. 2156020).

%\bibliographystyle{plain}
%\bibliography{a}

%\bibliographystyle{unsrt}
%\bibliography{a}

\end{document}